\documentclass{amsart}
\usepackage[utf8]{inputenc}
\usepackage[margin=01.45 in]{geometry}
\usepackage{dirtytalk}
\usepackage[dvipsnames]{xcolor}
\usepackage{float}
\usepackage[usestackEOL]{stackengine}[2013-10-15] 

\usepackage{amsmath}
\usepackage{amssymb}
\usepackage{amsthm}
\usepackage{tikz-cd}
\usepackage{tikz}
\usetikzlibrary{knots, snakes}
\usetikzlibrary{arrows.meta}
\usepgflibrary[arrows.meta]
\usetikzlibrary{angles, quotes, decorations.markings, decorations.pathmorphing, decorations.pathreplacing, decorations.shapes}
\usepackage{graphicx}
\usepackage{mathtools}
\usepackage{extarrows}
\usepackage{booktabs}
\usepackage{pythonhighlight}
\usepackage{hyperref}
\usepackage[all]{xy}
\usepackage{stmaryrd}
\usepackage{mathabx}
\usepackage{svg}
\usepackage{amsmath,scalerel}
\usepackage{comment}
\DeclareMathOperator*\bigcircop{\scalerel*{\bigcirc}{\bigodot}}

\usepackage{amsmath,amsthm,amssymb,latexsym,amscd}
\usepackage{mathrsfs}

\usepackage{hyperref}
\hypersetup{colorlinks=true,linkcolor=blue,anchorcolor=blue,citecolor=blue}
\pdfstringdefDisableCommands{%
  \let\noindent\empty 
}

\numberwithin{equation}{section}
\theoremstyle{plain}

\newtheorem*{lemma*}{Lemma}

\newtheorem{proposition}[equation]{Proposition}
\newtheorem*{proposition*}{Proposition} 

\newtheorem{theorem}[equation]{Theorem}
\newtheorem*{theorem*}{Theorem}

\newtheorem{corollary}[equation]{Corollary}
\newtheorem*{corollary*}{Corollary}

\newtheorem*{conjecture*}{Conjecture}


\definecolor{purp}{RGB}{148,30,238}
\definecolor{pinky}{RGB}{255, 174, 252}

\definecolor{purp1}{RGB}{120,30,238}
\definecolor{pinky1}{RGB}{243, 162, 252}
\definecolor{pinky2}{RGB}{252, 165, 249}
\definecolor{pinky3}{RGB}{247, 174, 255}

\definecolor{purp2}{RGB}{189, 144, 249}
\definecolor{purp3}{RGB}{203, 172, 243}


\theoremstyle{remark}
\newtheorem{claim}[equation]{Claim}

\newtheorem{remark}[equation]{Remark}

\theoremstyle{definition}

\newtheorem{definition}[equation]{Definition}

\setcounter{tocdepth}{1}

\DeclareMathOperator{\Span}{span}

\def\R{\mathbb R}

\def\wt{\widetilde}

\stackMath 


\title{A Note on Alternating knots in handlebodies}
\author{Lizzie Buchanan} \email{elizabeth-buchanan@uiowa.edu} \address{University of Iowa, Iowa, USA}
\author{Tanushree Shah} \email{tanushrees@cmi.ac.in}\address{Chennai Mathematical Institute, India}

\date{\today}
\keywords{}

\subjclass[2020]{}

\begin{document}
\begin{abstract}  
 We establish a Kauffman-Murasugi-Thistlethwaite-type theorem for alternating knots in a solid torus. Specifically, we show that any \textit{dotted-reduced} alternating diagram of a knot in a handlebody realizes the minimal crossing number, and that any two such diagrams of the same knot have identical writhe. The proof relies on a generalization of the Jones polynomial to the setting of handlebodies. A stronger version of this result was already proved by Boden, Karimi, and Sikora using a different generalized Jones polynomial; therefore, this text largely expands on one of the main proof tools. 
\end{abstract} 
\maketitle

\section{Introduction}
In the late 19th century, Tait formulated several conjectures concerning alternating knot diagrams, motivated by empirical observations and early attempts to classify knots. One of these, the Reduced Alternating Diagram Conjecture, asserts that any two reduced alternating diagrams of the same knot have the same number of crossings. Equivalently, the minimal crossing number of an alternating knot is achieved by any of its reduced alternating diagrams \cite{Tai}.

This conjecture remained unresolved for more than a hundred years, until it was finally proved independently by Kauffman \cite{Kauffman87}, Murasugi \cite{Murasugi87}, and Thistlethwaite \cite{Thistlethwaite87} in 1987. Their proofs made essential use of the Jones polynomial, showing that its span determines the crossing number for alternating knots. The resolution of Tait's conjecture not only confirmed his geometric intuition but also marked a key moment in the development of knot theory, demonstrating the power of quantum invariants in low-dimensional topology.

Every 3-manifold admits a Heegaard decomposition, dividing it into two handlebodies of $g$-genus. The decomposition represents the manifold as the union of these two handlebodies, glued along the common boundary surface. This decomposition provides a framework to project knots in $M^3$ onto the handlebodies. We have studied reduced alternating knots in genus 1-handle bodies, and a notion of \textit{dotted-reduced} diagrams (see Definition \ref{dotted reduced def}). Specifically we prove for such diagrams that\\

\noindent \textbf{Corollary 
\ref{main result}.}
    \textit{An alternating dotted-reduced diagram of a link in a solid torus is minimal (with respect to crossing number).}\\

We prove this by studying a generalization of the Jones polynomial developed by Bataineh and Hajij in \cite{BH}, for links in the solid torus. This polynomial is very similar to but not the same as the generalized Jones polynomial appearing in \cite{BKS} or Jones-Krushkal polynomial of  \cite{KRUSHKAL_2011} and \cite{BH22}.

We note that Corollary \ref{main result} is already proved in \cite{BKS} as a more general result, but we specialize to handlebodies here, studying the links in this setup so that we can later use these tools and this language to study links in arbitrary 3-manifolds.

We define generalized reduced diagrams (which can be detected from the projection) for knots in handlebodies and use the span of the generalized Jones polynomial for this study. These same ideas extend to studying alternating knots in genus g-handle bodies. .

\subsection*{Acknowledgements} The first author is partially supported by NSF Grant DMS-2038103 at the University of Iowa. The second author receives partial support from the Infosys Fellowship.
\section{Preliminary}
In this section we give a description of a Jones polynomial for links in the solid torus, and we refer the reader to \cite{BH} for more details.

Let \( M = S^1 \times D^2 \) be the solid torus, viewed as a handlebody of genus one. A link \( L \subset M \) can be studied via its projection onto a punctured disk \( D^2 \setminus \{0\} \), where the puncture corresponds to the core \( S^1 \times \{0\} \) of the solid torus.

To define a Jones-type polynomial invariant in this setting, we work within the \emph{Kauffman bracket skein module} of a link diagram $D$ in the solid torus developed by Bataineh and Hajij in \cite{BH}. The bracket $\langle \cdot \rangle$ in the variables $A$ and $t$ is defined by: 
$$\Big \langle 
\begin{tikzpicture}[scale=0.2]
  \draw[thin] (-1,1) -- (-0.2,0.2);
  \draw[thin] (0.2,-0.2) -- (1,-1);
  \draw[thin] (-1,-1) -- (1,1);
\end{tikzpicture} \Big \rangle= A \Big \langle )( \Big \rangle + A^{-1}\Big \langle \rotatebox[origin=c]{90}{)(}  \Big \rangle
$$

$$\Big \langle  D  \cup \bigcircop \Big \rangle = (-A^2 - A^{-2}) \langle  D  \rangle$$

$$\Big \langle  D  \cup \bigodot \Big \rangle = (-A^2 - A^{-2}) \hspace{2pt} t \hspace{2pt} \langle  D  \rangle$$

$$\Big \langle \bigcircop \Big \rangle = 1$$

$$\Big \langle \bigodot \Big \rangle = t$$

Let \( R = \mathbb{Z}[A^{\pm1}, t] \). The \emph{Kauffman bracket skein module} \( \mathcal{S}(M) \) is the \( R \)-module generated by isotopy classes of framed unoriented links in \( M \), modulo the Kauffman bracket relations shown above.

The module \( \mathcal{S}(M) \) has a natural basis indexed by the number of parallel copies of the core curve of the torus. That is, any link can be resolved via the skein relations into a linear combination of powers of a generator \( t \), where \( t \) denotes a longitude of the torus.

The skein module of the solid torus is the free $\mathbb{Z}[A^{\pm 1}]$--module 
generated by powers of $t$, and the above sum gives a well-defined skein element.  
For an oriented diagram $D$ with writhe $w(D)$, we normalize as in the case of $S^{3}$ to obtain the Jones polynomial.

The \emph{Jones polynomial} \( V_L(q, t) \) for a framed link \( L \subset S^1 \times D^2 \) is derived from the normalized Kauffman bracket as follows:
\begin{itemize}
  \item Let \( \langle D \rangle \in \mathcal{S}(S^1 \times D^2) \) be the Kauffman bracket of a diagram $D$ of a link \( L \), expressed in the basis \( \{1, t, t^2, \dots\} \). 

  \item Each basis element \( t^n \) corresponds to \( n \) parallel longitudes (a cable of the core).
 
  \item Apply the writhe correction and change variables to obtain a Jones polynomial for links in the thickened surface, invariant among all diagrams of the link $L$:
  \[
  V_L(q, t) = (-A^3)^{-w(D)} \langle D \rangle \Big|_{A = q^{-1/4}}
  \]
  where \( w(D) \) is the \textit{writhe} of the diagram (the number of positive crossings  \begin{tikzpicture}[every path/.style={thick}, every
node/.style={transform shape, knot crossing, inner sep=5pt}, scale=0.23]
    \node (tl) at (-1, 1) {};
    \node (tr) at (1, 1) {};
    \node (bl) at (-1, -1) {};
    \node (br) at (1, -1) {};
    \node (c) at (0,0) {};

    \draw [-{Stealth[length=4pt,width=4pt]}] (bl) -- (tr);
    \draw (br) -- (c);
    \draw [-{Stealth[length=4pt,width=4pt]}] (c) -- (tl);
    \end{tikzpicture} 
    minus the number of negative crossings
     \begin{tikzpicture}[every path/.style={thick}, every
node/.style={transform shape, knot crossing, inner sep=5pt}, scale=0.23]
    \node (tl) at (-1, 1) {};
    \node (tr) at (1, 1) {};
    \node (bl) at (-1, -1) {};
    \node (br) at (1, -1) {};
    \node (c) at (0,0) {};

    \draw [-{Stealth[length=4pt,width=4pt]}] (br) -- (tl);
    \draw (bl) -- (c);
    \draw [-{Stealth[length=4pt,width=4pt]}] (c) -- (tr);
    \end{tikzpicture}
    ). 
\end{itemize}
When we only consider one link $L$, we will generally write simple $V$ instead of $V_L$. 

Equivalently, let $S$ be a Kauffman state of $D$, i.e.\ a choice of $A$- or $B$-smoothing 
at each crossing.  Write $a(S)$ and $b(S)$ for the number of $A$- and $B$-smoothings, respectively.

\begin{figure}[h]
    \centering
    \begin{tikzpicture}[every path/.style={thick}, every
node/.style={transform shape, knot crossing, inner sep=2.75pt}, scale=0.7]
    \node (tl) at (-1, 1) {};
    \node (tr) at (1, 1) {};
    \node (bl) at (-1, -1) {};
    \node (br) at (1, -1) {};
    \node (c) at (0,0) {};

    \draw (bl) -- (tr);
    \draw (br) -- (c);
    \draw (c) -- (tl);
    
    \begin{scope}[xshift=4cm]
    \node (tl) at (-1, 1) {};
    \node (tr) at (1, 1) {};
    \node (bl) at (-1, -1) {};
    \node (br) at (1, -1) {};

    \draw (bl) .. controls (bl.8 north east) and (tl.8 south east) .. (tl);
    \draw (br) .. controls (br.8 north west) and (tr.8 south west) .. (tr);
    \end{scope}

    \begin{scope}[xshift=8cm]
    \node (tl) at (-1, 1) {};
    \node (tr) at (1, 1) {};
    \node (bl) at (-1, -1) {};
    \node (br) at (1, -1) {};

    \draw (bl) .. controls (bl.8 north east) and (br.8 north west) .. (br);
    \draw (tl) .. controls (tl.8 south east) and (tr.8 south west) .. (tr);
    \end{scope}

    \end{tikzpicture}
    \caption{ A crossing (left), its $A$-smoothing (middle), and $B$-smoothing (right)}
    \label{A-smooth and B-smooth}
\end{figure}
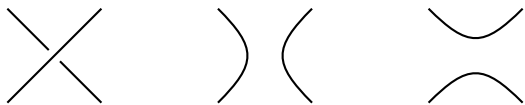
 
Then we express $\langle D \rangle$ as a sum over the contributions from each state $S_\sigma$ as 
\begin{equation}\label{contribution of each state}
   \langle D\rangle \;=\; 
\sum_{\sigma}
A^{\,a(S_\sigma)-b(S_\sigma)}\,
(-A^{2}-A^{-2})^{|S_\sigma|-1}t^{|T_\sigma|}, 
\end{equation}

\noindent where $|S_\sigma|$ is the total number of circles in the state, $|T_\sigma|$ is the number of circles that "go around the dot," $a(S_\sigma)$ is the number of $A$-smoothings in the state, and $b(S_\sigma)$ is the number of $B$-smoothings in the state. \ 

The Jones polynomial in the solid torus is sensitive to winding: links that are isotopic in \( S^3 \) but differ in their winding around the core of \( S^1 \times D^2 \) generally have different Jones polynomials. If a link lies in a 3-ball embedded in the solid torus, its Jones polynomial agrees with the classical Jones polynomial in \( S^3 \).

We can also look at this as $\langle D \rangle = \sum_\sigma \langle D | S_\sigma \rangle$ where the contribution of a single state $S_\sigma$ to $\langle D \rangle$ is

\begin{equation}\label{contrib of state}
    \textit{contribution of state $S_\sigma$} = \langle D | S_\sigma \rangle = A^{a(S_\sigma)-b(S_\sigma)}(-A^{2}-A^{-2})^{|S_\sigma|-1}t^{|T_\sigma|},
\end{equation}

\begin{definition}
    We say that two states $S$ and $S'$ are \textbf{adjacent} if $S'$ is obtained from $S$ by changing the smoothing at exactly one crossing. 
\end{definition}

As it happens with the classical Kauffman bracket, for the bracket polynomial for links in solid torus we have that 

\begin{proposition} \label{potential max and min from SA and SB}
For any link diagram $D$ in solid torus, 
$$\max\deg_A \langle D\rangle \leq \max\deg_A (\text{contribution of the all-$A$ state})= \max \deg_A \langle D | S_A \rangle $$ and $$\min\deg_A \langle D\rangle \geq \min\deg_A (\text{contribution of the all-$B$ state}) = \min\deg_A \langle D | S_B \rangle. $$
\end{proposition}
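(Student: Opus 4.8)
The plan is to imitate the classical proof of the analogous statement for the Kauffman bracket in $S^3$, where one shows that the extreme $A$-degrees of $\langle D\rangle$ are controlled by the all-$A$ and all-$B$ states. The key observation is that in the skein module of the solid torus the extra variable $t$ carries a nonnegative integer exponent $|T_\sigma|$ and never interacts with the $A$-grading, so the degree estimates in $A$ are unaffected by its presence. Concretely, I would work with the state-sum expression \eqref{contribution of each state}, bound $\max\deg_A$ and $\min\deg_A$ of each term $\langle D\mid S_\sigma\rangle$, and then compare these bounds across adjacent states to conclude that the all-$A$ (resp.\ all-$B$) state dominates (resp.\ is dominated by) every other state.

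First I would record the two elementary facts underpinning the estimate. From \eqref{contrib of state}, since $(-A^2-A^{-2})^{|S_\sigma|-1}$ has top $A$-degree $2(|S_\sigma|-1)$ and bottom $A$-degree $-2(|S_\sigma|-1)$, and $t^{|T_\sigma|}$ contributes nothing to the $A$-degree, we get
\begin{align*}
\max\deg_A \langle D\mid S_\sigma\rangle &= a(S_\sigma)-b(S_\sigma) + 2(|S_\sigma|-1),\\
\min\deg_A \langle D\mid S_\sigma\rangle &= a(S_\sigma)-b(S_\sigma) - 2(|S_\sigma|-1).
\end{align*}
Next I would prove the adjacency step: if $S'$ is obtained from $S$ by switching one $A$-smoothing to a $B$-smoothing, then $a(S')-b(S') = a(S)-b(S)-2$, while $|S'|$ differs from $|S|$ by exactly $1$ (switching a smoothing either merges two circles into one or splits one into two). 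Hence $\max\deg_A\langle D\mid S'\rangle - \max\deg_A\langle D\mid S\rangle = -2 \pm 2 \le 0$, so $\max\deg_A$ is non-increasing as we flip smoothings from $A$ to $B$; starting from the all-$A$ state $S_A$, every state $S_\sigma$ satisfies $\max\deg_A\langle D\mid S_\sigma\rangle \le \max\deg_A\langle D\mid S_A\rangle$. The mirror argument, flipping $B$ to $A$ starting from $S_B$, gives $\min\deg_A\langle D\mid S_\sigma\rangle \ge \min\deg_A\langle D\mid S_B\rangle$.

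Finally I would assemble these into the statement. Since $\langle D\rangle = \sum_\sigma \langle D\mid S_\sigma\rangle$, the top $A$-degree of the sum is at most the maximum of the top degrees of the summands, which is $\max\deg_A\langle D\mid S_A\rangle$; likewise the bottom $A$-degree of the sum is at least $\min_\sigma \min\deg_A\langle D\mid S_\sigma\rangle = \min\deg_A\langle D\mid S_B\rangle$. This is exactly the claimed pair of inequalities. The one point requiring a little care — the analogue of the only subtle step in the classical argument — is the claim that switching a single smoothing changes the circle count by exactly $\pm 1$ and never by $0$ or $2$; this is a purely local statement about how two arcs in a disk neighborhood of the crossing reconnect, and it holds verbatim in the punctured disk since the puncture lies outside that neighborhood. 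The possible "going around the dot" status of circles affects only the exponent of $t$, not $|S_\sigma|$ or $a(S_\sigma)-b(S_\sigma)$, so it plays no role in the degree bookkeeping; this is the main thing to state explicitly, but it is not a genuine obstacle.
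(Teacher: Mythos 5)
Your proposal is correct and is essentially the argument the paper has in mind: the paper omits the proof of Proposition \ref{potential max and min from SA and SB}, stating it is identical to the classical case, and your writeup is exactly that classical state-sum/adjacency argument together with the (correct) observation that the variable $t$ carries no $A$-degree and so does not disturb the bookkeeping. (Note only that for the inequality you need merely that a single resmoothing raises the circle count by at most one, so the subtle "never $0$" point you flag is not actually needed.)
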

The proof for this is the same as that of the classical case and hence we omit it. It follows that

\begin{proposition}\label{span bracket leq 4n}
    For any link diagram $D$ in solid torus with $n$ crossings,

    $$\Span \langle D \rangle \leq 4n.$$
\end{proposition}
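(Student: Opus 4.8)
The plan is to combine Proposition~\ref{potential max and min from SA and SB} with a direct count of the degrees coming from the all-$A$ and all-$B$ states. First I would compute $\max\deg_A\langle D\mid S_A\rangle$: the all-$A$ state has $a(S_A)=n$ and $b(S_A)=0$, so the monomial prefactor contributes $A^{n}$, and the factor $(-A^{2}-A^{-2})^{|S_A|-1}$ contributes at most $A^{2(|S_A|-1)}$ in top degree; the variable $t$ carries no $A$-degree, so $\max\deg_A\langle D\mid S_A\rangle = n + 2(|S_A|-1) = n + 2|S_A| - 2$. Symmetrically, the all-$B$ state has $a(S_B)=0$, $b(S_B)=n$, giving a prefactor $A^{-n}$, and the lowest-degree term of $(-A^{2}-A^{-2})^{|S_B|-1}$ is $A^{-2(|S_B|-1)}$, so $\min\deg_A\langle D\mid S_B\rangle = -n - 2(|S_B|-1) = -n - 2|S_B| + 2$.

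Next I would feed these into Proposition~\ref{potential max and min from SA and SB} to get
\[
\Span\langle D\rangle \;=\; \max\deg_A\langle D\rangle - \min\deg_A\langle D\rangle \;\le\; \bigl(n + 2|S_A| - 2\bigr) - \bigl(-n - 2|S_B| + 2\bigr) \;=\; 2n + 2|S_A| + 2|S_B| - 4.
\]
The remaining ingredient is the standard observation that $|S_A| + |S_B| \le n + 2$: the all-$A$ and all-$B$ states are related by switching the smoothing at each of the $n$ crossings one at a time, and each such switch changes the number of circles by exactly $\pm 1$, so $\big||S_A| - |S_B|\big| \le n$; combined with the fact that passing from $S_A$ to $S_B$ (or drawing the two states together) one sees the state graph argument giving $|S_A|+|S_B|\le n+2$. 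Substituting this bound yields $\Span\langle D\rangle \le 2n + 2(n+2) - 4 = 4n$, as desired.

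The main obstacle is justifying $|S_A| + |S_B| \le n + 2$ in the solid-torus setting. In the classical ($S^3$) case this is a clean planar/graph-theoretic fact about the checkerboard graph of the diagram, and the point is that the same argument goes through here because the bound concerns only the underlying diagram in the projection surface (a punctured disk), not the skein module structure or the $t$-variable — the circle counts $|S_A|$, $|S_B|$ are purely combinatorial features of the two extreme states. I would therefore state this as a lemma with the remark that its proof is identical to the classical one (an induction on $n$: resolving one crossing in a way that merges two circles in one of the two states), since the $t$-bookkeeping plays no role in the $A$-degree estimate. Everything else is the routine degree arithmetic above.
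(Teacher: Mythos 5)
Your argument is correct and is essentially the paper's own proof: both bound the span by $\max\deg_A\langle D \,|\, S_A\rangle-\min\deg_A\langle D \,|\, S_B\rangle = 2n+2(|S_A|+|S_B|-2)$ and then invoke the classical bound $|S_A|+|S_B|\le n+2$, which transfers verbatim because the circle counts are those of the underlying planar diagram and the $t$-variable carries no $A$-degree. One small caveat: your intermediate observation $\bigl| |S_A|-|S_B| \bigr|\le n$ is not what is needed (the classical lemma $|S_A|+|S_B|\le n+2$ for connected diagrams is the real input, exactly as you then note), and, as in the classical case, both your proof and the paper's implicitly assume the diagram is connected.
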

\begin{proof}
    From Proposition \ref{potential max and min from SA and SB}, we have that 

    \begin{align*}
        \Span V_D &= \max \deg_A \langle D \rangle - \min\deg_A \langle D \rangle \\
        & \leq \max\deg_A \langle D | S_A \rangle - \min\deg_A \langle D | S_B \rangle \\
        & = a(S_A) - b(S_A) + 2(|S_A|-1) - \Big( a(S_B) - b(S_B) - 2(|S_B|-1)\Big)\\
        & = n + 2(|S_A| -1) - \Big(-n - 2(|S_B| - 1) \Big)\\
        & = 2n + 2(|S_A| + |S_B|-2)\\
        & \leq 2n + 2n\\
        & = 4n
    \end{align*}

\end{proof}
\begin{remark}
This is exactly the same proof as in the classical case, included because we will look more closely at some of the inequalities later on.
\end{remark}

\section{Dotted reduced diagrams}

We now introduce the notions of reduced and dotted reduced diagrams for knots in a solid torus.

\subsection{Reduced and dotted-reduced diagrams}

\begin{definition} \label{def of reducible crossing}
    Let $D$ be a link diagram in the solid torus. A crossing $C$ is \textit{nugatory} if we can draw a simple closed curve in the solid torus which separates the solid torus and which intersects $D$ only in the double point of that crossing $c$. 
\end{definition}

This is analogous to the definition of \textit{nugatory} crossings in knots in $S^3$ in classical knot theory. In that setting, a diagram is called \textit{reduced} if it does not have any nugatory crossings. The idea is that any nugatory crossing can be \say{removed} by twisting one half of the diagram. In the setting of a solid torus, not all nugatory crossings can actually be undone that way, as the structure of the solid torus may prevent such twists.

\begin{definition}\label{def of dotted-irreducible crossing}
    Let $D$ be a link diagram in a solid torus, represented as a dotted diagram, with a crossing $c$ that is nugatory as in Definition \ref{def of reducible crossing}. If we can draw a simple closed curve in the solid torus which intersects $D$ only in the double point of crossing $c$, but cannot draw one such that this simple closed curve is contractible in the solid torus, then we say the crossing $c$ is \textit{dotted-irreducible}. Otherwise, we say the crossing is \textit{dotted-reducible} (see Figure \ref{fig:removing dotted-reducible}).
\end{definition}

What we call \textit{dotted-irreducible} crossings for diagrams in a solid torus are comparable to  Boden, Karimi, and Sikora's \textit{essential} crossings for diagrams in a thickened surface in \cite{BKS}.

\begin{definition}\label{dotted reduced def}
    A link diagram in a solid torus is \textit{dotted-reduced} if it has no nugatory crossings, or if its only nugatory crossings are dotted-irreducible crossings. 
\end{definition}

\begin{figure}
    \centering
    \includegraphics[width=4in]{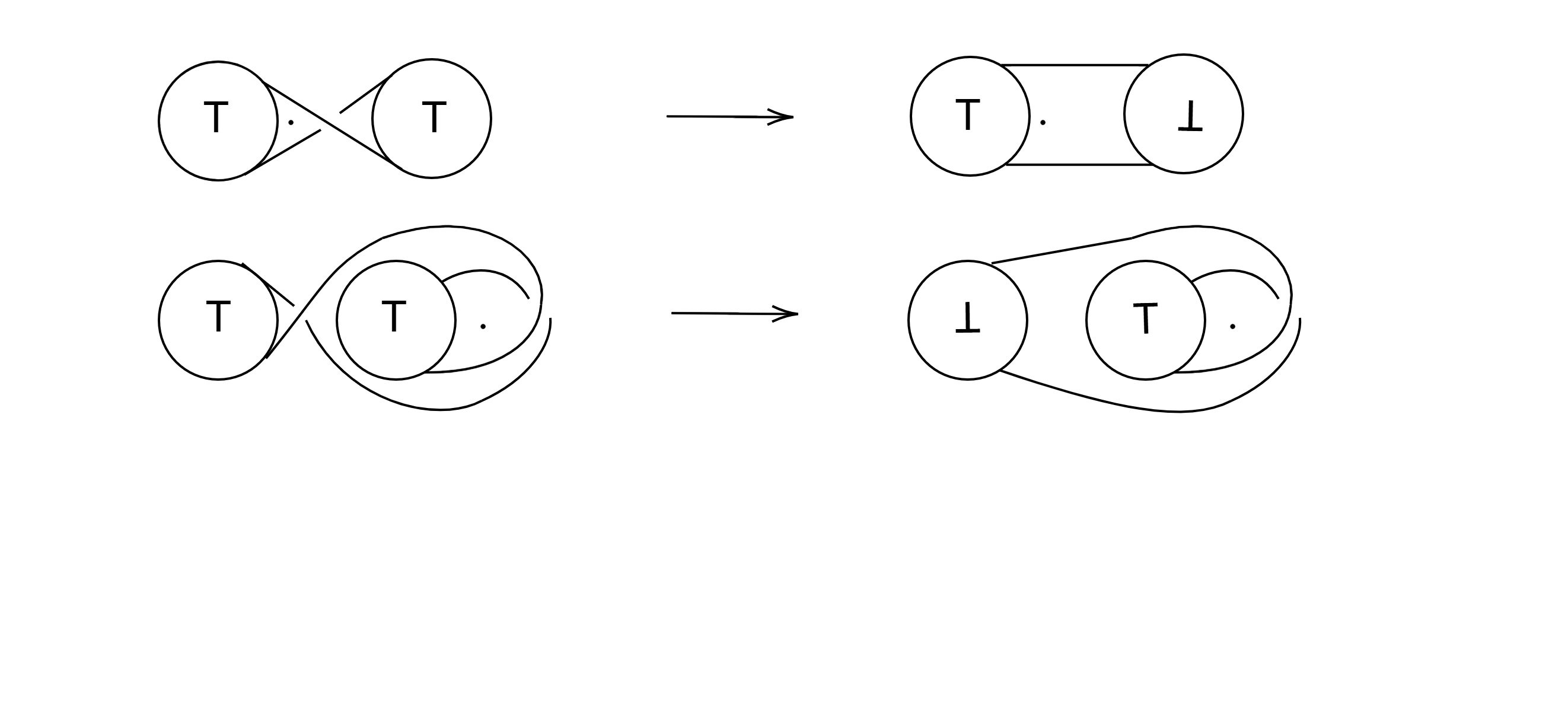}
    \vspace{-2cm}
    \caption{Removing dotted-reducible crossings}
    \label{fig:removing dotted-reducible}
\end{figure}

In the following section, we prove:

\begin{theorem} \label{span bracket and crossing number}
    Let $D$ be an alternating dotted-reduced link diagram in a solid torus with $n$ crossings. Then 
    $$\Span \langle D \rangle = 4n.$$
\end{theorem}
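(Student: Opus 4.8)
The plan is to adapt the classical Kauffman--Murasugi--Thistlethwaite argument to the solid torus, tracking carefully where the extra variable $t$ and the notion of dotted-irreducible crossings enter. By Proposition \ref{span bracket leq 4n} we already have $\Span\langle D\rangle \le 4n$, so the whole task is to show the reverse inequality $\Span\langle D\rangle \ge 4n$. From the chain of inequalities in the proof of Proposition \ref{span bracket leq 4n}, equality $4n$ forces (i) $|S_A| + |S_B| = n+2$, and (ii) no cancellation of the extremal $A$-degree terms coming from $\langle D|S_A\rangle$ and $\langle D|S_B\rangle$ against contributions of other states. So I would split the argument into these two parts.

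First, the combinatorial identity. For an \emph{alternating} diagram, the all-$A$ and all-$B$ states are exactly the two sets of faces ("black" and "white" regions) of the checkerboard coloring, so $|S_A|$ and $|S_B|$ are the numbers of faces in the two colors, and $|S_A| + |S_B| = n+2$ is precisely Euler's formula for the $4$-valent graph underlying $D$ on the sphere/plane. In the solid torus I would work with the dotted planar diagram (projection to the punctured disk) and check that the alternating condition still makes $S_A$ and $S_B$ the checkerboard states; the punctured-disk embedding still has an Euler characteristic computation giving $|S_A|+|S_B| = n+2$, possibly up to understanding how the puncture/dot sits in one of the regions. Here is where dotted-reducedness first matters: I expect that a dotted-\emph{reducible} nugatory crossing would let a region pinch off around the dot trivially and break the count or the state-adjacency connectivity below, while a dotted-\emph{irreducible} crossing is forced by the topology and does no harm.

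Second, the non-cancellation. One shows that as one moves from $S_A$ through the lattice of states by changing one smoothing at a time, the maximal $A$-degree of $\langle D|S_\sigma\rangle$ strictly decreases at the first step and never comes back up to the level $\max\deg_A\langle D|S_A\rangle$; here the standard fact is that changing one smoothing changes $|S_\sigma|$ by $\pm 1$, and $a(S)-b(S)$ by $\mp 2$, so the potential top degree $a-b+2(|S_\sigma|-1)$ can only stay the same or drop, and a reducedness hypothesis rules out the "stays the same for two consecutive changes" pathology that would be needed to return to the top. The new wrinkle is the factor $t^{|T_\sigma|}$: two states contributing to the same power of $A$ could in principle still fail to cancel because they carry different powers of $t$ — but that only helps us (it makes cancellation \emph{harder}), so the classical estimate still gives a surviving top term, and symmetrically a surviving bottom term. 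Combining, $\max\deg_A\langle D\rangle = a(S_A)-b(S_A)+2(|S_A|-1)$ and $\min\deg_A\langle D\rangle = a(S_B)-b(S_B)-2(|S_B|-1)$, and subtracting and using part one gives exactly $4n$.

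The main obstacle I anticipate is part one done correctly in the punctured-disk setting: making precise why a dotted-reduced alternating diagram still satisfies $|S_A|+|S_B|=n+2$ and why the state graph stays connected enough for the non-cancellation induction, i.e. isolating exactly the role of "dotted-irreducible" so that those (topologically unavoidable) nugatory crossings are harmless while genuine reducible nugatory crossings are excluded. I would handle this by reducing to the classical dual-graph/checkerboard picture on the disk and treating the dot as living inside a single fixed face, then checking that a dotted-irreducible crossing contributes a curve around the core that cannot be capped off, hence does not create the kind of trivial face-splitting that kills the degree bound.
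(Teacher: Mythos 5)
There is a genuine gap in your non-cancellation step, and it sits exactly where the paper's proof does its real work. You argue that "a reducedness hypothesis rules out the `stays the same' pathology," so the classical estimate already gives a surviving top term, and you then treat the factor $t^{|T_\sigma|}$ as a bonus that "only helps." But a dotted-reduced diagram is allowed to have nugatory crossings, as long as they are dotted-irreducible, and performing a $B$-smoothing at such a (negative) nugatory crossing \emph{increases} the number of state circles by one, producing a state adjacent to $S_A$ with the very same top $A$-degree $a-b+2(|S_\sigma|-1)$. So the classical argument does not survive: after setting $t=1$ the statement is in fact false for such diagrams (a nugatory crossing multiplies the classical bracket by $-A^{\pm3}$, so the span is strictly less than $4n$), which shows cancellation really does happen at the level where you are working. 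The $t$-variable is not a convenience but the entire point: the missing argument is to show (i) that any state attaining $\max\deg_A\langle D|S_A\rangle$ is obtained from $S_A$ by $B$-smoothing negative dotted-irreducible nugatory crossings, and (ii) that for an alternating dotted-reduced diagram the $S_A$-circle passing through such a crossing does not encircle the dot, so that the $B$-smoothing creates two new circles around the dot and hence $|T_\sigma|\geq |T_A|+2$. Only then do the competing top-degree contributions carry different powers of $t$ and fail to cancel, giving $\max\deg_A\langle D\rangle=\max\deg_A\langle D|S_A\rangle$ and, symmetrically, the statement for $S_B$.

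A secondary, more minor point: you place part of the burden of dotted-reducedness on the identity $|S_A|+|S_B|=n+2$, suggesting a dotted-reducible crossing could "break the count." In the paper this identity is just the classical fact for alternating diagrams (forgetting the dot, the underlying diagram is an ordinary planar alternating diagram), and it holds whether or not the diagram is reduced; dotted-reducedness plays no role there. Its only role is the one described above, namely controlling which states can reach the extreme $A$-degrees and guaranteeing the $t$-grading separates them from $S_A$ and $S_B$.
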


We then relate this Kauffman bracket polynomial for links in a solid torus to the corresponding Jones polynomial for links in a solid torus and obtain: 

\begin{corollary}
\label{span V = crossing number}
    Let $D$ be an alternating dotted-reduced link diagram in a solid torus with $n$ crossings. Then 
    $$\Span V = n(D).$$  
\end{corollary}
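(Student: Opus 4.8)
The plan is to deduce Corollary \ref{span V = crossing number} directly from Theorem \ref{span bracket and crossing number} by tracking how the span changes under the normalization that turns the Kauffman bracket $\langle D\rangle$ into the Jones polynomial $V$. The key point is that the normalization only multiplies $\langle D\rangle$ by a monomial in $A$ and then substitutes $A = q^{-1/4}$, so it affects the location of the maximal and minimal $A$-degrees but not the \emph{width} of the support, up to a fixed scaling factor coming from the change of variables.

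First I would recall from the definition that $V_L(q,t) = (-A^3)^{-w(D)}\langle D\rangle\big|_{A = q^{-1/4}}$. Multiplying by $(-A^3)^{-w(D)}$ shifts every $A$-exponent by $-3w(D)$, a constant independent of the state, hence $\max\deg_A$ and $\min\deg_A$ both shift by the same amount and $\Span_A\big((-A^3)^{-w(D)}\langle D\rangle\big) = \Span_A\langle D\rangle = 4n$ by Theorem \ref{span bracket and crossing number}. Next I would handle the substitution $A = q^{-1/4}$: a Laurent polynomial in $A$ of $A$-span $4n$ becomes a Laurent polynomial in $q$ of $q$-span $4n \cdot \tfrac14 = n$, since each factor of $A$ contributes $q^{-1/4}$ and the difference between the largest and smallest powers of $A$ gets multiplied by $1/4$ (and the sign flip does not affect degrees). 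This gives $\Span_q V = n = n(D)$, which is the claim, where $n(D)$ denotes the crossing number of the diagram $D$.

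One subtlety I would address carefully is that $V$ here is a two-variable polynomial in $q$ and $t$, so I should be explicit that $\Span V$ means the span in the variable $q$ (the $t$-grading is untouched by the normalization, as $t$ does not appear in the writhe factor or the substitution). I would also note that there is no cancellation issue: Theorem \ref{span bracket and crossing number} already asserts that the extreme $A$-degree terms of $\langle D\rangle$ are nonzero, and multiplying by a monomial and performing an invertible monomial substitution preserves nonvanishing of those extreme terms, so the span genuinely equals $n(D)$ rather than merely being bounded by it.

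I do not expect a serious obstacle here; this is a bookkeeping argument of the same flavor as the classical deduction of $\Span V_L = n$ from $\Span\langle D\rangle = 4n$ for reduced alternating diagrams in $S^3$. The only thing that requires a moment's care is confirming that passing to the solid-torus setting introduces nothing new: the extra variable $t$ and the dot-counting exponent $|T_\sigma|$ play no role in the $A$-degree computation, so the reduction is formally identical to the classical one. I would therefore keep the proof short, citing Theorem \ref{span bracket and crossing number} for the bracket span and then performing the degree shift and rescaling explicitly.
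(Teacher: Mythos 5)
Your proposal is correct and is exactly the argument the paper intends: its proof of this corollary is a one-line appeal to the construction of $V$ from $\langle D\rangle$ together with Theorem \ref{span bracket and crossing number}, and your degree bookkeeping (writhe factor shifts all $A$-exponents uniformly, the substitution $A=q^{-1/4}$ rescales the span by $1/4$ without cancellation) simply spells that out, just as in the classical case.
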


\begin{proof}
    This follows from the construction of $V$ from $\langle D \rangle$ and Theorem \ref{span bracket and crossing number}.
\end{proof}

\begin{corollary}\label{main result}
    An alternating dotted-reduced diagram of a link in a solid torus is (minimal with respect to crossing number). 
\end{corollary}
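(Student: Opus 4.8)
The plan is to deduce Corollary \ref{main result} from Corollary \ref{span V = crossing number} together with Proposition \ref{span bracket leq 4n} (or rather its Jones-polynomial reformulation). The logical structure mirrors the classical Kauffman--Murasugi--Thistlethwaite argument: the span of the Jones polynomial is a \emph{link invariant} that, for \emph{any} diagram, is bounded above by the crossing number of that diagram, and for the special class of alternating dotted-reduced diagrams it is \emph{equal} to the crossing number. Hence an alternating dotted-reduced diagram cannot have more crossings than the minimum over all diagrams of the same link.

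In more detail, first I would record that $\Span V_L$ depends only on the link $L \subset S^1 \times D^2$ and not on the chosen diagram; this is because $V_L$ is itself a diagram-independent invariant, as set up in Section 2 (the writhe correction is precisely what makes the normalized bracket invariant under all three Reidemeister moves in the solid torus). Next, I would note that for \emph{any} diagram $D'$ of $L$ with $n(D')$ crossings, Proposition \ref{span bracket leq 4n} gives $\Span \langle D' \rangle \le 4\,n(D')$, and since passing from $\langle D' \rangle$ to $V_{L}$ only involves multiplication by a unit $(-A^3)^{-w(D')}$ and the substitution $A = q^{-1/4}$, we get $\Span V_{L} \le n(D')$. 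Now let $D$ be an alternating dotted-reduced diagram of $L$ with $n = n(D)$ crossings. By Corollary \ref{span V = crossing number}, $\Span V_L = n(D)$. Combining, for every diagram $D'$ of $L$ we have $n(D) = \Span V_L \le n(D')$, so $n(D)$ is the minimal crossing number of $L$; that is, $D$ is minimal.

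I would present this as a short proof: let $D$ be an alternating dotted-reduced diagram of $L$, let $D'$ be any diagram of $L$, and chain the inequalities $n(D) = \Span V_L \le n(D')$, invoking Corollary \ref{span V = crossing number} for the equality and the $\Span\langle\,\cdot\,\rangle \le 4n$ bound (transported to $V$) for the inequality; conclude that $D$ realizes the minimum. One should also remark that the same reasoning shows any two alternating dotted-reduced diagrams of the same link have the same crossing number, since both equal $\Span V_L$ — this is the solid-torus analogue of Tait's Reduced Alternating Diagram Conjecture, and matches the abstract's claim.

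The one point that genuinely needs care — and hence the main (mild) obstacle — is the invariance claim $\Span V_{D'} = \Span V_{D'}$ across diagrams, i.e. that $V$ as defined by $V_L(q,t) = (-A^3)^{-w(D)}\langle D\rangle|_{A=q^{-1/4}}$ really is independent of $D$. This is asserted in Section 2 but used crucially here, so I would make sure it is either cited to \cite{BH} or checked via the Reidemeister moves (R2 and R3 leave $\langle D\rangle$ invariant and don't change $w$; R1 changes $\langle D\rangle$ by a factor of $-A^{\pm 3}$, exactly compensated by the writhe normalization — and none of these moves interact with the dot/core in a way that affects the $t$-variable bookkeeping). Everything else is a one-line chain of inequalities, so the substance of Corollary \ref{main result} is entirely carried by Theorem \ref{span bracket and crossing number}.
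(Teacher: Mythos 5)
Your proposal is correct and follows essentially the same route as the paper: the paper's proof is exactly the chain $\Span V \leq n(L) \leq n(D) = \Span V$, using the diagram-independence of $V$, the bound $\Span V \leq n(D')$ for any diagram $D'$ (from Proposition \ref{span bracket leq 4n}), and Corollary \ref{span V = crossing number} for the equality. Your extra attention to verifying the invariance of $V$ under Reidemeister moves is a reasonable point of care but does not change the argument.
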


\begin{proof}
    Analogous to proof for classical case: for an alternating dotted-reduced diagram $D$, we have by Corollary \ref{span V = crossing number} that $$\Span V \leq n(L) \leq n(D) = \Span V,$$ so $D$ must be a minimal diagram. 
\end{proof}

\section{Proof of main theorem}

Now, we begin to prove Theorem \ref{span bracket and crossing number}. 



In the proof of Proposition \ref{span bracket leq 4n}, we obtain 
$$\Span \langle D \rangle \leq \max\deg_A\langle D |S_A \rangle - \min\deg_A \langle D |S_B \rangle = 2n + 2(|S_A| + |S_B| -2) \leq 4n.$$

From classical knot theory, we know that if $D$ is an alternating diagram, then $|S_A|+|S_B| = n + 2$ and so that last inequality is an equality \cite{Kauffman87}. 
This holds regardless of whether or not the diagram is reduced (in a classical sense) so it still holds for us in the setting of dotted-reduced. 

To show that in an alternating dotted-reduced diagram we have $\Span \langle D \rangle = 4n$, it remains to show that the first inequality must also be an equality. 
\begin{figure}
    \centering
   \includegraphics[height=7cm]{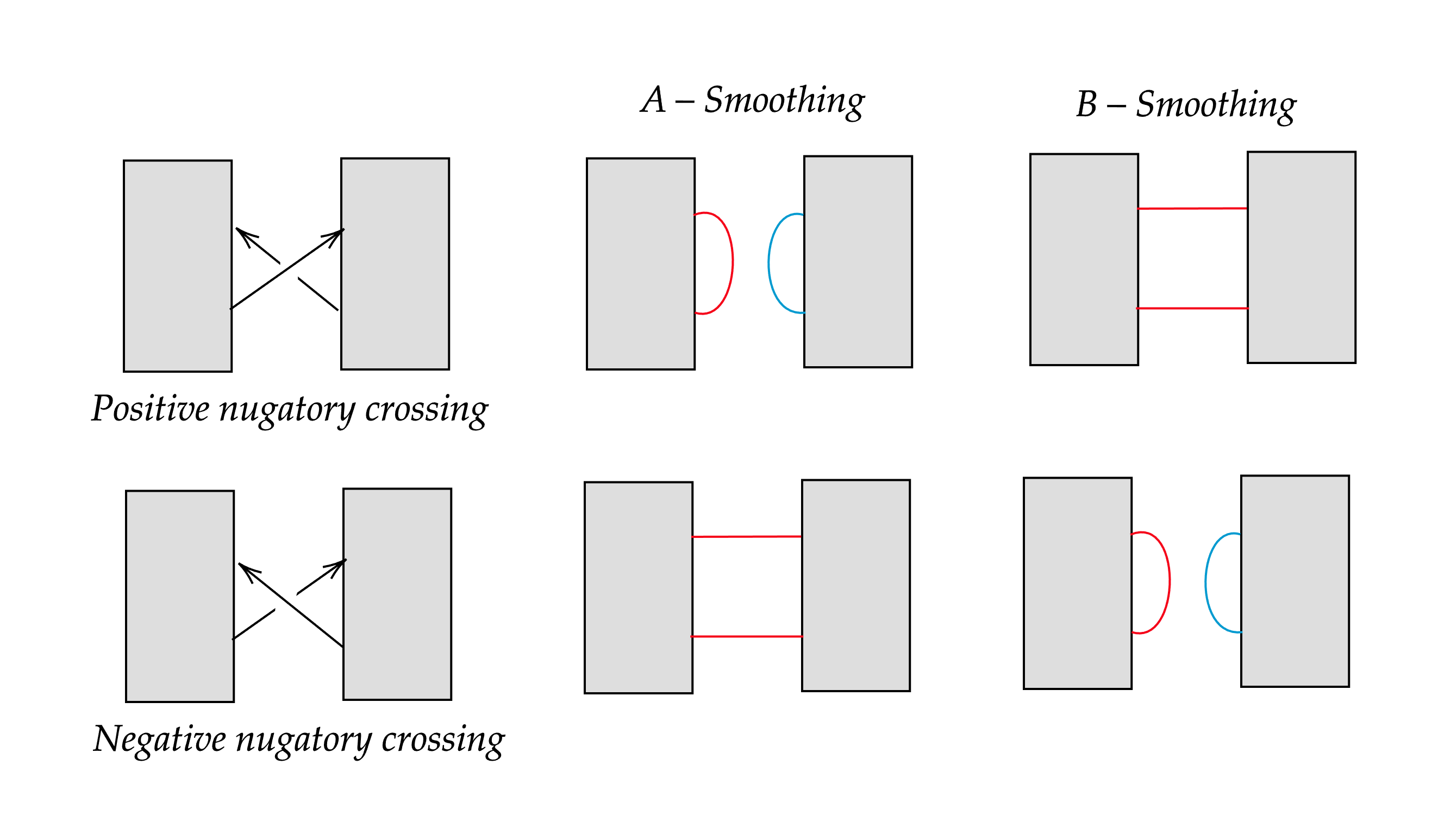}
    \caption{Smoothing crossings}
    \label{SC}
\end{figure}
\begin{theorem} \label{realizes potential max and min}
    Let $D$ be an alternating dotted-reduced link diagram in solid torus. Then  $$\max\deg_A \langle D\rangle = \max\deg_A (\text{contribution of the all-$A$ state})= \max\deg_A \langle D |S_A \rangle$$ and $$\min\deg_A \langle D\rangle = \min\deg_A (\text{contribution of the all-$B$ state}) = \min\deg_A \langle D |S_B \rangle. $$
\end{theorem}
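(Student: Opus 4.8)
The plan is to run the Kauffman--Murasugi--Thistlethwaite argument while carrying the variable $t$ along. By \eqref{contrib of state} the term of $\langle D\mid S\rangle$ of top $A$-degree is $(-1)^{|S|-1}t^{|T_S|}A^{f(S)}$ with $f(S):=a(S)-b(S)+2(|S|-1)$, and the term of bottom $A$-degree is $(-1)^{|S|-1}t^{|T_S|}A^{g(S)}$ with $g(S):=a(S)-b(S)-2(|S|-1)$. Put $M:=f(S_A)=\max\deg_A\langle D\mid S_A\rangle$. By Proposition \ref{potential max and min from SA and SB} we have $\max\deg_A\langle D\rangle\le M$, and the coefficient of $A^{M}$ in $\langle D\rangle$ is the polynomial
\[
P(t)\;=\;\sum_{S\,:\,f(S)=M}(-1)^{|S|-1}t^{|T_S|}\ \in\ \mathbb{Z}[t].
\]
Thus it is enough to show $P(t)\neq 0$; the assertion about $\min\deg_A$ then follows from the mirror computation, with $(f,M,S_A)$ replaced throughout by $(g,\,\min_S g(S)=g(S_B),\,S_B)$.

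First I would rerun the classical loop count. If $S'$ is obtained from $S$ by switching one $A$-smoothing to a $B$-smoothing then $|S'|=|S|\pm1$ and $f(S')=f(S)-2\pm2\in\{f(S),f(S)-4\}$; inducting on the number of $B$-smoothings recovers $f(S)\le M$ for all $S$, and moreover shows that if $f(S)=M$ then $f$ is constant equal to $M$ along every path of single switches from $S_A$ to $S$, so each such switch is a \emph{splitting} move (it increases the loop count). In particular, for $S\neq S_A$ with $f(S)=M$, the first switch $S_A\to S_1$ is a splitting move available at a crossing $c$ of $D$. Fixing a checkerboard colouring, the loops of $S_A$ are the boundaries of the regions of one colour, and a splitting move out of $S_A$ is available at $c$ precisely when the two like-coloured regions meeting $c$ coincide; for an alternating diagram this forces $c$ to be a nugatory crossing. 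So every state with $f(S)=M$ other than $S_A$ is reached from $S_A$ by a splitting switch at a nugatory crossing of $D$.

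Now the \emph{dotted-reduced} hypothesis enters. Since an embedded loop in the punctured projection disk winds $0$ or (up to sign) $1$ time around the dot, a splitting move cutting a loop $\gamma$ into $\gamma'\cup\gamma''$ has the pair of winding numbers equal to $\{0,\operatorname{wind}(\gamma)\}$, or, when $\operatorname{wind}(\gamma)=0$, possibly $\{1,-1\}$. Hence $|T|$ never decreases under a splitting move, and it is unchanged exactly when one of $\gamma',\gamma''$ is null-homotopic in the punctured disk. In that case a separating curve witnessing the nugatority of $c$ can be taken parallel to that null-homotopic loop, on its dot-free side, so $c$ is \emph{dotted-reducible}. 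Because $D$ is dotted-reduced it has no dotted-reducible nugatory crossings, so the first splitting switch $S_A\to S_1$ strictly increases $|T|$; combined with the monotonicity just noted, $|T_S|\ge|T_{S_1}|>|T_{S_A}|$ for every $S\neq S_A$ with $f(S)=M$. Therefore the coefficient of $t^{|T_{S_A}|}$ in $P(t)$ is contributed by $S_A$ alone and equals $(-1)^{|S_A|-1}\neq 0$, so $P(t)\neq 0$ and $\max\deg_A\langle D\rangle=M$. The same argument applied to $B$-smoothings and the loops of $S_B$ gives $\min\deg_A\langle D\rangle=g(S_B)=\min\deg_A\langle D\mid S_B\rangle$.

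The step I expect to be the main obstacle is the geometric dichotomy in the third paragraph: extracting from Definition \ref{def of dotted-irreducible crossing} the statement that a splitting switch at a nugatory crossing $c$ preserves the dot-winding data if and only if $c$ is dotted-reducible. Making this rigorous means putting a nugatory crossing of an annular diagram into a normal form, identifying the loop that a splitting switch severs with the boundary of the self-touching colour region, and checking that one of the resulting loops is null-homotopic exactly when the witnessing separating curve can be isotoped off the dot. Everything else --- the loop count, the reduction to $P(t)\neq 0$, and the bookkeeping of $t$-exponents --- is a transcription of the classical KMT argument.
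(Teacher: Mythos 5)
Your proposal is correct and takes essentially the same route as the paper: both identify the states sharing the top $A$-degree with $S_A$ as those obtained from $S_A$ by splitting switches at nugatory crossings of the alternating diagram, and both use dotted-reducedness to force the first such switch to strictly raise the number of dot-encircling circles (with later splitting switches never lowering it), so the $t^{|T_{S_A}|}$ term of the top $A$-coefficient survives; the only real difference is that the paper settles the key geometric step by the case analysis of Figure \ref{A}, while you argue it by producing a witnessing curve homotopic to the would-be null-homotopic new state circle. One small correction to that step: the witnessing curve should be taken parallel to the new circle on the side of the doubled checkerboard region (that is what guarantees it meets $D$ only at the double point of $c$), and the conclusion that it does not encircle the dot then follows because it is homotopic in the punctured disk to the null-homotopic circle, not from its lying on the dot-free side.
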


\begin{proof}



By symmetry, it suffices to prove the first statement. We claim that the contribution of the all-$A$ state to the bracket polynomial is not cancelled out by the contribution of any other state. 

    Suppose for contradiction that it does cancel out. Then there is some other state $S_\sigma$ with $\max\deg_A\langle D | S_\sigma \rangle = \max\deg_A \langle D | S_A \rangle$, and so 

    $$\underset{=\max\deg_A\langle D | S_\sigma \rangle}{\underbrace{a(S_\sigma)-b(S_\sigma) + 2(|S_\sigma| - 1)}}  = \underset{=\max\deg_A \langle D | S_A \rangle}{\underbrace{n + 2(|S_A| - 1)}}.$$

    Since $a(S_\sigma)-b(S_\sigma) = n-2b(S_\sigma)$, this is equivalent to $$|S_\sigma| = |S_A| + b(S_\sigma).$$

Consider $\wt{D}$, the image of $D$ under the obvious lift from $A \times I$ to $\R^2 \times I$.

    Since we want to find the total number of $A$-circles, and the number of $A$-circles in $D$ is the same as the number of $A$-circles in $\wt{D}$, we can consider $\wt{D}$ and leverage what we know about classical diagrams. 
    In a reduced alternating classical diagram $\wt{D}$, any state adjacent to $S_A$ will have one fewer circle than $S_A$. However, if the alternating diagram has nugatory crossings, there may be states adjacent to $S_A$ that have more circles.

    \begin{claim}
    Let $\wt{D}$ be a classical (not necessarily reduced) alternating link diagram. 
    State $S_\sigma$ satisfies $|S_\sigma| = |S_A| + b(S_\sigma)$ if and only if $S_\sigma$ is obtained from $S_A$ by performing $B$-smoothings at negative nugatory crossings. 
    \end{claim}

    \begin{proof}
    Let $S_\sigma$ be a state satisfying $|S_\sigma| = |S_A| + b(S_\sigma)$. Then $S_\sigma$ is obtained from $S_A$ by changing the smoothings on a sequence of $b(S_\sigma)$ crossings such that at every step in the sequence, the total number of circles in the resulting state increases by one. 

    Starting with $S_A$ and changing the smoothing of a non-nugatory crossing will decrease the number of circles. So $S_\sigma$ must be obtained by changing the smoothing at a nugatory crossing.

    In Figure \ref{A} we see that a state $S_\sigma$ adjacent to $S_A$ will have $|S_\sigma| = |S_A| + 1$ if and only if $S_\sigma$ is obtained from $S_A$ by performing a $B$-smoothing on a negative nugatory crossing. Similarly, any state $S_\sigma$ will have $|S_\sigma| = |S_A| + b(S_\sigma)$ if and only if $S_\sigma$ is obtained from $S_A$ by performing $B$-smoothings on $b(S_\sigma)$ negative nugatory crossings.  

    \end{proof}

   \begin{claim}
    Let ${D}$ be a dotted-reduced alternating link diagram in a solid torus.
    A state $S_\sigma$ satisfies $|S_\sigma| = |S_A| + b(S_\sigma)$ if and only if $S_\sigma$ is obtained from $S_A$ by performing $B$-smoothings on negative dotted-irreducible crossings. 
   \end{claim}

   \begin{proof}
     Any nugatory crossing in $\wt{D}$ corresponds to a dotted-reducible or dotted-irreducible crossing in $D$. Since our $D$ is dotted-reduced, any nugatory crossing in $\wt{D}$ corresponds to a dotted-irreducible crossing in $D$. So if $S_\sigma$ has the same max $A$-degree as $S_A$, then $S_\sigma$ has $B$-smoothings at some number of negative dotted-irreducible crossings, and has $A$-smoothings at all other crossings. 
   \end{proof}

\begin{figure}
    \centering
   \includegraphics[height=13cm]{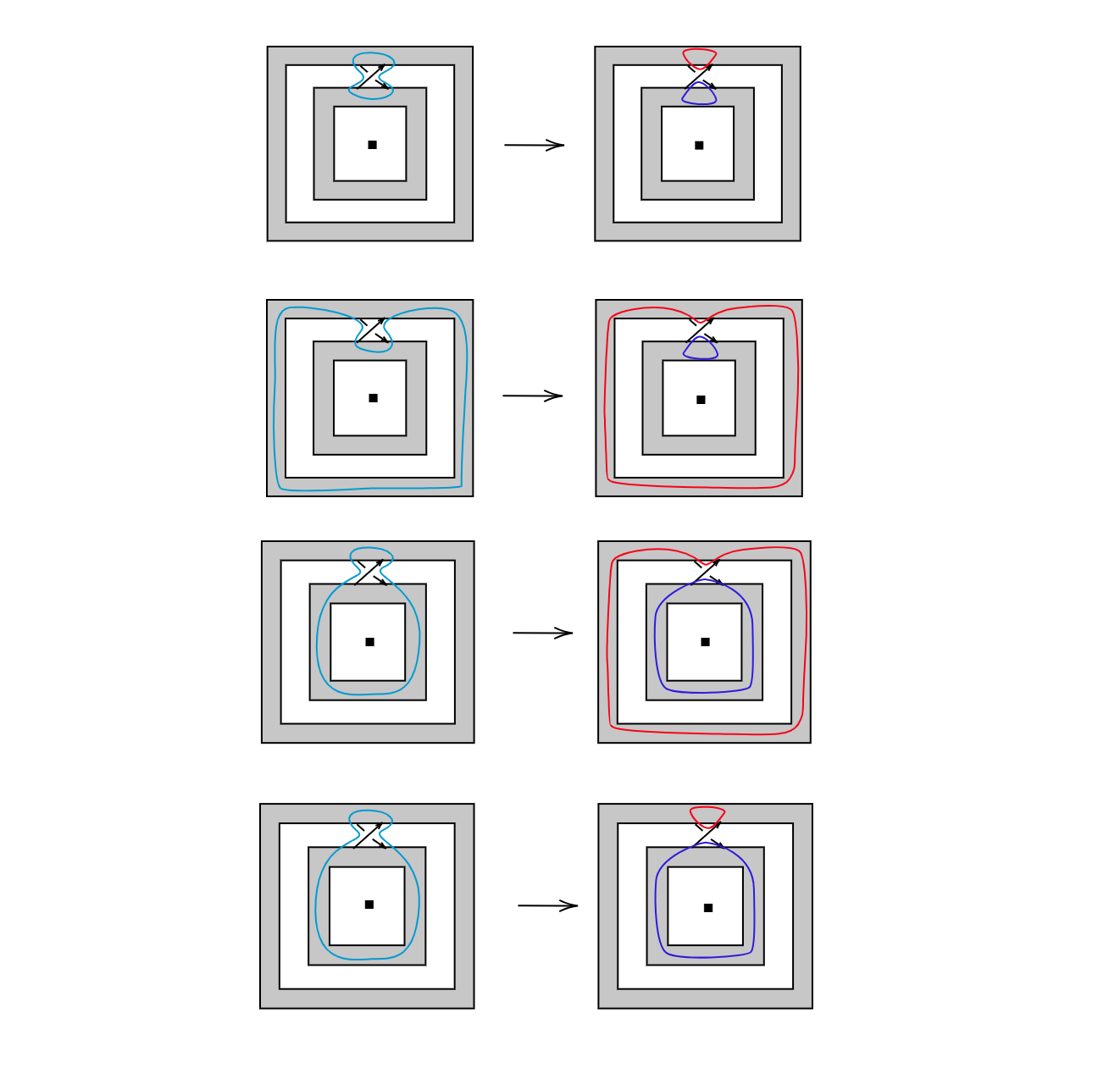}
    \caption{In each of the four rows, we have on the left a state $S$ in which our negative dotted-irreducible crossing is given an $A$-smoothing, and on the right the adjacent state $S'$ which is identical to $S$ except at that one crossing. The four rows show each of the four possibilities for how a state circle involving this crossing could interact with the rest of the diagram.}
    \label{A}
\end{figure}
    \begin{claim}
        If $S_\sigma$ is obtained from $S_A$ by performing $B$-smoothings at negative dotted-irreducible crossings, then $|T_\sigma| > |T_A|$. (As in \ref{contribution of each state} and \ref{contrib of state}, $|T_\sigma|$ is the number of circles that \say{go around the dot} in state $S_\sigma$.)
    \end{claim}

    \begin{proof}

    Consider two states $S_\sigma'$ and $S_\sigma$ that differ only at some negative dotted-irreducible crossing $x$. If $S_\sigma'$ is the state with $x$ $A$-smoothed and $S_\sigma$ is the state with $x$ $B$-smoothed, then either $|T_\sigma| = |T_\sigma'|$ or $|T_\sigma| = |T_\sigma'| + 2$. This is shown in Figure \ref{A-smooth and B-smooth}. So if $S_\sigma$ is obtained from $S_A$ by successively changing the smoothing at negative dotted-irreducible crossings (changing from $A$-smoothings to $B$-smoothings), then $|T_\sigma| \geq |T_A|$.  

    Now consider what happens when we start with the all-$A$ state and change the smoothing at just one negative dotted-irreducible crossing. Call this state $S_\sigma'$. Since the diagram is dotted-reduced and alternating, the $A$-state must look like the example shown in Figure \ref{A}: the circle in the $A$-state that meets itself at this negative dotted-irreducible crossing does not go around the dot. And when we change the smoothing at this crossing to be a $B$-smoothing, we obtain two circles that go around the dot, as in the last row of Figure \ref{A}.  Then the number of circles that go around the dot in $S_\sigma'$ is $|T_\sigma'| = |T_A| + 2$.  

    So if $S_\sigma$ is obtained from $S_A$ by changing some number of negative dotted-irreducible crossings, then $$|T_\sigma| \geq |T_\sigma'| = |T_A| + 2 > |T_A|.$$

    \end{proof}

    Thus any state $S_\sigma$ which has the same $\max\deg_A$ as $S_A$ will not have the same value of $|T|$, so the contributions of $S_\sigma$ and $S_A$ cannot cancel out in the Kauffman bracket for links in the solid torus. Therefore, the contribution of the all-$A$ state to the Jones polynomial does not cancel out, and we have $\max\deg_A \langle D \rangle = \max\deg_A \langle D | S_A \rangle$. 

     This completes the proof of Theorem \ref{realizes potential max and min}.

    \end{proof}

\begin{remark}[Alternating knots in genus $g$-handlebodies]

The Jones polynomial for alternating knots in genus $g$-handlebodies can be obtained analogously to the computation in \cite{BH}. In particular, the same polynomial expression as derived therein applies in our setting, and the proof carries over verbatim. Since no new ideas are required, we omit the details.
\end{remark}

\bibliographystyle{plain}
\bibliography{references.bib}
\vspace{10pt}

\end{document}